\newtheorem{theorem}{Theorem}[section]
\newtheorem{lemma}[theorem]{Lemma}
\theoremstyle{definition}
\theoremstyle{remark}
\numberwithin{equation}{section}
\begin{document}

\title[Limiting Weak Type Estimate for Capacitary Maximal Function]
{Limiting Weak Type Estimate for Capacitary Maximal Function}


\author[Jie Xiao]{Jie Xiao}
\address{Jie Xiao\\Department of Mathematics and Statistics, Memorial University of Newfoundland, St. John's, NL A1C 5S7, Canada}
\email{jxiao@mun.ca}
\author[Ning Zhang]{Ning Zhang}
\address{Ning Zhang\\Department of Mathematics and Statistics, Memorial University of Newfoundland, St. John's, NL A1C 5S7, Canada}
\email{nz7701@mun.ca}

\thanks{Project supported by NSERC of Canada as well as by URP of Memorial University, Canada.}

\subjclass[2000]{Primary 42B25; Secondary 46B70, 28A12}
\date{}

\keywords{Limiting weak type estimate; Capacitary maximal function}

\begin{abstract} A capacitary analogue of the limiting weak type estimate of P. Janakiraman for the Hardy-Littlewood maximal function of an $L^1(\mathbb R^n)$-function (cf. \cite{article1, article2}) is discovered.
\end{abstract}

\maketitle



\section{Statement of Theorem}\label{s1} 

For an $L^1_{loc}$-integrable function $f$ on $\mathbb R^n$, $n\ge 1$, let $Mf(x)$ denote the Hardy-Littlewood maximal function of $f$ at $x\in\mathbb R^n$:
$$
Mf(x)=\sup_{x\in B}\frac{1}{\mathscr{L}(B)}\int_B|f(y)|dy,
$$
where the supremum is taken over all Euclidean balls $B$ containing $x$ and $\mathscr{L}(B)$ stands for the $n$-dimensional Lebesgue measure of $B$. Among several results of \cite{article1, article2}, P. Janakiraman obtained the following fundamental limit:
$$
\lim _{\lambda\to 0} \lambda \mathscr{L}\big(\{x \in \mathbb{R}^n:\ Mf(x)> \lambda\}\big)=\|f\|_1=\int_{\mathbb R^n}|f(y)|dy\quad\forall\quad f\in L^1(\mathbb R^n).
$$

This note studies the limiting weak type estimate for a capacity. To be more precise, recall that a set function $C(\cdot)$ on $\mathbb{R} ^n$ is said to be a capacity (cf. \cite{article0, article3}) provided that
$$
\begin{cases}
C(\emptyset)=0;\\
0\leq C(A) \leq \infty\quad\forall\quad A\subseteq\mathbb R^n;\\
C(A)\leq C(B)\quad\forall\quad A\subseteq B\subseteq\mathbb R^n;\\
C(\cup _{i=1}^ {\infty}A_i)\leq \sum _{i=1} ^{\infty}C(A_i)\quad\forall\quad A_i\subseteq\mathbb R^n.
\end{cases}
$$
For a given capacity $C(\cdot)$ let 
$$
M_Cf(x)=\sup_{x\in B}\frac{1}{C(B)}\int _B |f(y)|dy
$$
be the capacitary maximal function of an $L^1_{loc}$-integrable function $f$ at $x$ for which the supremum ranges over all Euclidean balls $B$ containing $x$; see also \cite{article4}. 

In order to establish a capacitary analogue of the last limit formula for $f\in L^1(\mathbb R^n)$, we are required to make the following natural assumptions:

\begin{itemize}

\item Assumption 1 - the capacity $C\big(B(x,r)\big)$ of the ball $B(x,r)$ centered at $x$ with radius $r$ is a function depending on $r$ only, but also the capacity $C\big(\{x\}\big)$ of the set $\{x\}$ of a single point $x\in\mathbb R^n$ equals $0$.
\item Assumption 2 - there are two nonnegative functions $\phi$ and $\psi$ on $(0,\infty)$ such that 
$$
\begin{cases}
\phi(t)C(E)\leq C(tE)\leq \psi(t) C(E)\ \ \forall \ \ t>0\ \ \&\ \ tE=\{tx\in\mathbb R^n:\ x\in E\subseteq\mathbb R^n\};\\
\lim_{t\to 0} {\phi(t)}=0=\lim_{t\to 0}{\psi(t)}\ \ \&\ \ \lim_{t\to 0} {\psi(t)}/{\phi(t)}=\tau\in (0,\infty).
\end{cases}
$$
\end{itemize}
Here, it is worth mentioning that the so-called $p$-capacity satisfies all the assumptions; see also \cite{article5}.

\begin{theorem}\label{t11}
Under the above-mentioned two assumptions, one has
$$
\lim _{\lambda\to 0} \lambda C\big(\{x \in \mathbb{R} ^n: M_C f(x)> \lambda\}\big)\approx \|f\|_1\quad\forall\quad f\in L^1(\mathbb R^n).
$$
Here and henceforth, $\mathsf{X}\approx\mathsf{Y}$ means that there is a constant $c>0$ independent of $\mathsf{X}$ and $\mathsf{Y}$ such that $c^{-1}\mathsf{Y}\le\mathsf{X}\le c\mathsf{Y}$. 
\end{theorem}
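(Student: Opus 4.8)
The plan is to establish the two-sided bound $\lambda C(\{M_Cf>\lambda\})\approx\|f\|_1$ by proving the matching upper and lower estimates separately, mimicking the structure of Janakiraman's argument but replacing Lebesgue measure with the capacity $C$ and exploiting the scaling Assumption 2 wherever the classical proof uses the homogeneity $\mathscr{L}(tE)=t^n\mathscr{L}(E)$.

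\emph{Upper bound.} First I would prove $\limsup_{\lambda\to0}\lambda C(\{M_Cf>\lambda\})\lesssim\|f\|_1$. The natural tool is a capacitary weak-type $(1,1)$ inequality. Fix $\lambda>0$ and set $E_\lambda=\{x:M_Cf(x)>\lambda\}$. For each $x\in E_\lambda$ there is a ball $B_x\ni x$ with $\int_{B_x}|f|>\lambda C(B_x)$. Applying a Vitali-type covering lemma to the family $\{B_x\}$ extracts a countable disjoint subfamily $\{B_i\}$ whose dilates cover $E_\lambda$; writing $B_i=B(x_i,r_i)$ and using that the enlarged balls are $5B_i=5\cdot B_i$ (a dilate by the factor $5$ about a translate), I would invoke the countable subadditivity of $C$ together with the scaling estimate $C(5B_i)\le\psi(5)C(B_i)$ from Assumption 2 to obtain
\begin{equation}\label{upper}
C(E_\lambda)\le\sum_i C(5B_i)\le\psi(5)\sum_i C(B_i)\le\frac{\psi(5)}{\lambda}\sum_i\int_{B_i}|f|\le\frac{\psi(5)}{\lambda}\|f\|_1.
\end{equation}
This immediately gives $\lambda C(E_\lambda)\le\psi(5)\|f\|_1$ for every $\lambda$, hence the desired upper bound with constant $\psi(5)$.

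\emph{Lower bound.} Next I would prove $\liminf_{\lambda\to0}\lambda C(E_\lambda)\gtrsim\|f\|_1$, which is where the limiting (rather than merely weak-type) nature of the statement enters. The idea is that for small $\lambda$, the set $E_\lambda$ must be large because even a single large ball carrying most of the mass of $f$ forces $M_Cf>\lambda$ on a big region. Concretely, given $\varepsilon>0$ choose $R$ so large that $\int_{B(0,R)}|f|\ge(1-\varepsilon)\|f\|_1$. For $x$ with $|x|$ large relative to $R$, the ball $B=B(0,|x|+R)$ contains both $x$ and the bulk of the mass, so $M_Cf(x)\ge\int_B|f|/C(B)\ge(1-\varepsilon)\|f\|_1/C(B(0,|x|+R))$. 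Thus $x\in E_\lambda$ whenever $C(B(0,|x|+R))<(1-\varepsilon)\|f\|_1/\lambda$; since $C(B(0,s))$ depends only on $s$ (Assumption 1) and tends to $0$ as $s\to0$ but grows unboundedly, for small $\lambda$ the set $E_\lambda$ contains a ball $B(0,\rho_\lambda)$ with $C(B(0,\rho_\lambda))\approx(1-\varepsilon)\|f\|_1/\lambda$. Monotonicity of $C$ then yields $\lambda C(E_\lambda)\ge\lambda C(B(0,\rho_\lambda))\gtrsim(1-\varepsilon)\|f\|_1$, and letting $\varepsilon\to0$ finishes the lower estimate.

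\emph{Role of the scaling ratio $\tau$.} The constant $\tau=\lim_{t\to0}\psi(t)/\phi(t)$ is what guarantees that the upper and lower constants are comparable rather than allowing the two-sided bound to degenerate, so I expect the main obstacle to be the \emph{lower} bound and specifically the control of the geometry: unlike the Lebesgue case, $C(B(0,s))$ need not be strictly increasing or continuous, so identifying a level radius $\rho_\lambda$ with $C(B(0,\rho_\lambda))\approx\|f\|_1/\lambda$ requires care, and the disparity between the inner bound $\phi$ and outer bound $\psi$ in Assumption 2 must be absorbed precisely by the finiteness and positivity of $\tau$. Handling the asymptotic matching of these two scaling functions as $\lambda\to0$ (equivalently as the relevant radii shrink) is the delicate point; the covering argument for the upper bound and the single-ball argument for the lower bound are otherwise routine adaptations of the classical proof.
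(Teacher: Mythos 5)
Your two estimates are fine as far as they go, and they do track parts of the paper: your covering argument is essentially the paper's Lemma 2.4 (the capacitary weak-type $(1,1)$ bound, obtained from countable subadditivity, translation invariance from Assumption 1, and the dilation bound $C(5B)\le\psi(5)C(B)$ from Assumption 2), and your single-ball argument gives a correct lower bound for the limit inferior once you justify that $C(B(0,s))\to\infty$ as $s\to\infty$ (which follows from Assumption 2 via $C(B(0,1))\le\psi(1/s)\,C(B(0,s))$) and handle the possible jumps of $s\mapsto C(B(0,s))$ by paying a fixed dilation factor such as $\psi(4)$.

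The genuine gap is that this proves only
$$
c^{-1}\|f\|_1\ \le\ \liminf_{\lambda\to0}\lambda C(E_\lambda)\ \le\ \limsup_{\lambda\to0}\lambda C(E_\lambda)\ \le\ c\,\|f\|_1,
\qquad E_\lambda=\{x\in\mathbb R^n:\ M_Cf(x)>\lambda\},
$$
whereas the theorem asserts that $\lim_{\lambda\to0}\lambda C(E_\lambda)$ \emph{exists} and is comparable to $\|f\|_1$. Existence of the limit is the entire point of a \emph{limiting} weak-type estimate (Janakiraman's result is a true limit, with value exactly $\|f\|_1$), and it is where the paper spends essentially all of its effort: normalizing $\nu=|f|\,dy/\|f\|_1$, it uses the scaling inequalities $M_C\nu(x/t)/\psi(t)\le M_C\nu_t(x)\le M_C\nu(x/t)/\phi(t)$ to convert the $\lambda\to0$ behavior of $\lambda C(\{M_C\nu>\lambda\})$ into the $t\to0$ behavior of $C(\{M_C\nu_t>\lambda\})$ at a fixed level $\lambda$; it shows (Lemma 2.2) that the rescaled measures $\nu_t$ concentrate and asymptotically behave like the delta measure $\delta_0$, whose distribution set is computed exactly, $C(\{M_C\delta_0>\lambda\})=1/\lambda$ (Lemma 2.1); and it then runs a Cauchy-sequence argument on $h(\lambda)=\lambda C(\{M_C\nu>\lambda\})$ to conclude that the limit exists and lies in $[\tau^{-1},\tau]$. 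None of these ingredients appears in your proposal, and two-sided bounds alone cannot be upgraded to existence: a function trapped between two positive constants may oscillate forever. Relatedly, your diagnosis of where $\tau$ enters is off — the two-sided bound needs only finiteness of $\psi$ at one fixed dilation factor; the ratio $\tau=\lim_{t\to0}\psi(t)/\phi(t)$ is what pins down the interval containing the limit once the delta-measure approximation under rescaling is in play, i.e., it belongs to the existence-of-limit part of the argument that your proposal omits.
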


\section{Four Lemmas}\label{s2}

To prove Theorem \ref{t11}, we will always suppose that $C(\cdot)$ is a capacity obeying Assumptions 1-2 above, but also need four lemmas based on the following capacitary maximal function $M_C \nu$ of a finite nonnegative Borel measure $\nu$ on $\mathbb R^n$:
$$
M_C\nu(x)=\sup_{B\ni x}\frac{\nu(B)}{C(B)}\quad\forall\quad x\in\mathbb R^n,
$$
where the supremum is taken over all balls $B\subseteq\mathbb R^n$ containing $x$.

\begin{lemma}\label{l21} If $\delta_0$ is the delta measure at the origin, then
$$
C\big(\{x \in \mathbb{R}^n: M_C \delta _0(x) > \lambda\}\big)=\frac{1}{\lambda}.
$$
\end{lemma}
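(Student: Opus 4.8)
The plan is to compute $M_C\delta_0(x)$ explicitly and then read off its superlevel sets. Since $\delta_0(B)$ equals $1$ if the ball $B$ contains the origin and $0$ otherwise, the supremum defining $M_C\delta_0(x)$ is achieved only over balls containing both $x$ and the origin. For such balls the value of the quotient is $1/C(B)$, so to maximize it we want $C(B)$ as small as possible. By Assumption 1 the capacity of a ball depends only on its radius, and a ball containing both $x$ and $0$ must have radius at least $|x|/2$ (with the smallest such ball being the one with the segment from $0$ to $x$ as a diameter). The first step is therefore to argue that the capacity is monotone increasing in the radius — which follows from the monotonicity axiom $C(A)\le C(B)$ for $A\subseteq B$ together with Assumption 1 — so the infimum of $C(B)$ over admissible balls is attained at radius $|x|/2$.

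Granting this, I would write $M_C\delta_0(x)=\bigl(C(B(\cdot,|x|/2))\bigr)^{-1}$, where the capacity is evaluated on any ball of radius $|x|/2$. Let me abbreviate $g(r)=C\big(B(\cdot,r)\big)$ for the common capacity of a radius-$r$ ball; this is well defined by Assumption 1. Then the superlevel set becomes
$$
\{x:\ M_C\delta_0(x)>\lambda\}=\Bigl\{x:\ g(|x|/2)<\tfrac1\lambda\Bigr\},
$$
which, since $g$ is nondecreasing, is a ball centered at the origin. The second step is to identify the radius of this ball and apply $C$ to it.

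The key computation is then that $C$ of this superlevel set equals exactly $1/\lambda$. I would use the dilation structure of Assumption 2 to pin this down: writing the superlevel ball as a dilate of a fixed unit ball and using $\phi(t)C(E)\le C(tE)\le\psi(t)C(E)$, one tracks how $g(r)$ scales in $r$ and inverts the relation $g(r)=1/\lambda$. The cleanest route is to observe that the set where $M_C\delta_0>\lambda$ is precisely the ball $B_\lambda=\{x:\ g(|x|/2)<1/\lambda\}$, and that by the definition of $g$ and monotonicity its boundary radius $r_\lambda$ satisfies $g(r_\lambda/2)=1/\lambda$ in the limiting/continuity sense, so that $C(B_\lambda)=g(r_\lambda)$; the scaling inequalities then force $\lambda\,C(B_\lambda)=1$ after matching the homogeneity exponents. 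The expected main obstacle is the delicate handling of the boundary of the superlevel set: because the inequality defining $M_C\delta_0(x)>\lambda$ is strict while $g$ may have jumps or flat pieces, I must verify that $g$ is continuous (or at least that its jump set does not affect the capacity of the superlevel set) so that the value $1/\lambda$ is attained exactly rather than up to constants. Establishing this continuity — presumably a consequence of Assumption 1 ($C(\{x\})=0$) together with the scaling bounds of Assumption 2 — is the crux, since everything else reduces to elementary monotonicity and the definition of $M_C$.
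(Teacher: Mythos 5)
Your evaluation of the maximal function is the geometrically correct one for the definition as the paper states it (supremum over \emph{all} balls containing $x$): the extremal ball has the segment $[0,x]$ as a diameter, so $M_C\delta_0(x)=1/g(|x|/2)$, where $g(r)=C\big(B(\cdot,r)\big)$. The genuine gap is your final step, where you assert that the scaling inequalities of Assumption 2 ``force'' $\lambda\,C(B_\lambda)=1$ after matching homogeneity exponents. They do not, and in fact under your reading the stated equality is \emph{false}. Test it on the model case $g(r)=cr^\alpha$ (e.g.\ the $p$-capacity, $\alpha=n-p$, which is the paper's motivating example): then $\{x:\ M_C\delta_0(x)>\lambda\}=\{x:\ c(|x|/2)^\alpha<1/\lambda\}$ is the ball of radius $2(c\lambda)^{-1/\alpha}$ centered at the origin, whose capacity is $2^\alpha/\lambda$, not $1/\lambda$. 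The most that Assumption 2 can give along your route is a two-sided bound of the form $\phi(2)/\lambda\le C\big(\{M_C\delta_0>\lambda\}\big)\le\psi(2)/\lambda$; no bookkeeping of exponents recovers the exact constant, because the exact constant is simply wrong for the uncentered operator.

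The paper's own proof takes the other fork: it evaluates the supremum using the ball $B(x,|x|)$, centered at $x$ with radius $|x|$, obtaining $M_C\delta_0(x)=1/g(|x|)$ --- i.e.\ it implicitly treats $M_C$ as a \emph{centered} maximal operator. With that reading the superlevel set is $\{x:\ g(|x|)<1/\lambda\}=B(0,r)$ where $g(r)=1/\lambda$, and its capacity is exactly $g(r)=1/\lambda$; the factor of $2$ that destroys your argument never appears. So you have actually uncovered a real discrepancy between the paper's stated definition and its proof, but as a proof of the lemma \emph{as stated} your argument cannot be completed: either restrict the supremum to balls centered at $x$ (then your computation becomes the paper's and the equality follows), or accept only $C\big(\{M_C\delta_0>\lambda\}\big)\approx 1/\lambda$, which would still suffice for Theorem \ref{t11} but not for the lemma's equality. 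Your secondary worry --- that $g$ must be continuous (and onto) for the value $1/\lambda$ to be attained at some radius --- is legitimate, and the paper glosses over it as well (``if $C(B(0,r))$ equals $1/\lambda$\dots''); but it is not the main obstruction. The factor of $2$ is.
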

\begin{proof} According to the defintion of the delta measure and Assumptions 1-2, we have
$$
M_C \delta _0(x)=\frac{1}{C(B(x,|x|))}\quad\forall\quad |x|\neq 0.
$$
Now, if $x$ obeys $M_C\delta_0(x)>\lambda$, then 
$$
C(B(x,|x|))< \frac{1}{\lambda}.
$$
Note that if $C\big(B(0,r)\big)$ equals $\frac{1}{\lambda}$, then one has the following property:
$$
\begin{cases}
C(B(x,|x|))< \frac{1}{\lambda}\quad\forall\quad |x|<r;\\
C(B(x,|x|))= \frac{1}{\lambda}\quad\forall\quad |x|=r;\\
C(B(x,|x|))> \frac{1}{\lambda}\quad\forall\quad |x|>r.\\
\end{cases}
$$
Therefore,
$$
\{x\in \mathbb{R}^n: M_C \delta _0(x)> \lambda\}=B(0,r),
$$
and consequently, 
$$
C\big(\{x\in\mathbb R^n:\   M_C \delta _0(x)> \lambda\}\big)=C\big(B(0,r)\big)=\frac{1}{\lambda}.
$$
\end{proof}

\begin{lemma}\label{l22} If $\nu$ is a finte nonnegative Borel measure on $\mathbb R^n$ with $\nu(\mathbb{R}^n)=1$, then
$$
\lim _{t\to 0} C\big(\{ x \in \mathbb{R}^n:\ M_C\nu_t(x)> \lambda\}\big)=\frac{1}{\lambda},
$$
where 
$$
\begin{cases}
t>0;\\
\nu_t(E)=\nu(\frac{1}{t}E);\\
\frac{1}{t}E=\{\frac{x}{t}:\ x\in E\};\\
E\subseteq\mathbb R^n.
\end{cases}
$$
\end{lemma}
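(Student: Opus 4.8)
The plan is to exploit the fact that, as $t\to0$, the dilated measure $\nu_t$ concentrates its entire unit mass at the origin, so that the distribution of $M_C\nu_t$ approaches that of $M_C\delta_0$ computed in Lemma \ref{l21}. Let $r$ be the radius furnished by Lemma \ref{l21}, so that $C(B(0,r))=\lambda^{-1}$ and $\{M_C\delta_0>\lambda\}=B(0,r)$, and write $E_t=\{x:M_C\nu_t(x)>\lambda\}$. I would show that $E_t$ is squeezed, in the sense of capacity, between balls whose radii tend to $r$, and then pass to the limit using the monotonicity and one-sided continuity of $\rho\mapsto C(B(0,\rho))$ already invoked in the trichotomy of Lemma \ref{l21}. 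Throughout I use only $\nu_t(B)=\nu(B/t)$, $\nu(\mathbb R^n)=1$, the dilation bounds of Assumption 2, and countable subadditivity, and I write the competing balls as $B(x,\rho)$ as in Lemma \ref{l21}.

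For the lower bound I would fix $r'<r$ together with a small $s>0$ such that $r'+s<r$. For every $x$ with $|x|\le r'$ the ball $B(x,r'+s)$ contains $B(0,s)$, so after dilation $B(x,r'+s)/t\supseteq B(0,s/t)$ and hence $\nu_t(B(x,r'+s))\ge\nu(B(0,s/t))$. Since $C(B(x,r'+s))=C(B(0,r'+s))$ depends on the radius alone, this gives the uniform bound $M_C\nu_t(x)\ge\nu(B(0,s/t))/C(B(0,r'+s))$ for all $|x|\le r'$. As $r'+s<r$ forces $C(B(0,r'+s))<\lambda^{-1}$ and $\nu(B(0,s/t))\to1$, the right-hand side exceeds $\lambda$ once $t$ is small; thus $\{|x|\le r'\}\subseteq E_t$, whence $C(E_t)\ge C(B(0,r'))$. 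Letting $t\to0$ and then $r'\uparrow r$ yields $\liminf_{t\to0}C(E_t)\ge C(B(0,r))=\lambda^{-1}$.

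The harder direction is the matching upper bound, and the essential difficulty is that $M_C\nu_t(x)$ can be large not only because a ball about $x$ captures the bulk of the concentrating mass, but also because a tiny ball far from the origin captures a sliver of the tail of $\nu$ while having correspondingly tiny capacity. To separate these effects I would split $\nu=\nu'+\nu''$ with $\nu'=\nu|_{B(0,R)}$ and $\nu''=\nu|_{B(0,R)^c}$, choosing $R$ so that $\nu''(\mathbb R^n)<\epsilon$. For $x\in E_t$ pick $B=B(x,\rho)$ with $\nu(B/t)>\lambda C(B)$; since $\nu(B/t)\le1$ this forces $C(B(0,\rho))<\lambda^{-1}$, i.e. $\rho<r$. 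If $0\in B$ then $\nu(B/t)/C(B)\le\delta_0(B)/C(B)\le M_C\delta_0(x)$, so $x\in\{M_C\delta_0>\lambda\}=B(0,r)$. If $0\notin B$ and $B/t$ meets $B(0,R)$ then $|x|<\rho+tR<r+tR$; while if $0\notin B$ and $B/t\subseteq B(0,R)^c$ then $\nu(B/t)=\nu''(B/t)=\nu''_t(B)$, so $M_C\nu''_t(x)>\lambda$. Consequently
$$E_t\subseteq B(0,r+tR)\cup\{x:M_C\nu''_t(x)>\lambda\}.$$
A Vitali $5r$-covering argument, combined with countable subadditivity and the dilation bound $C(B(0,5\rho))\le\psi(5)C(B(0,\rho))$, furnishes the weak-type estimate $C(\{M_C\mu>\lambda\})\le\psi(5)\lambda^{-1}\mu(\mathbb R^n)$ for every finite nonnegative Borel measure $\mu$; applied to $\mu=\nu''_t$, whose total mass is $\nu''(\mathbb R^n)<\epsilon$, it gives $C(\{M_C\nu''_t>\lambda\})<\psi(5)\epsilon/\lambda$. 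Hence $C(E_t)\le C(B(0,r+tR))+\psi(5)\epsilon/\lambda$, and letting $t\to0$ (by right-continuity of $\rho\mapsto C(B(0,\rho))$) and then $\epsilon\to0$ yields $\limsup_{t\to0}C(E_t)\le\lambda^{-1}$.

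Combining the two estimates gives $\lim_{t\to0}C(E_t)=\lambda^{-1}$. The step I expect to be most delicate is the upper bound, specifically the control of the tail term $\{M_C\nu''_t>\lambda\}$: the naive pointwise comparison $M_C\nu_t\le M_C\delta_0$ fails precisely because of far-away small balls, and it is the scaling-invariant weak-type bound — whose only ingredients are subadditivity and $C(B(0,5\rho))\le\psi(5)C(B(0,\rho))$ — that makes its capacity uniformly small independently of $t$. I would also record explicitly the mild regularity of $\rho\mapsto C(B(0,\rho))$ (monotonicity together with one-sided continuity) needed to push the ball capacities to $\lambda^{-1}$, this being exactly the property already used in Lemma \ref{l21}.
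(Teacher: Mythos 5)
Your proposal is correct, and it reaches the limit by a genuinely different decomposition than the paper's. The paper splits the super-level set $\{M_C\nu_t>\lambda\}$ into $B(0,\epsilon_1)\cup E_{1,\lambda}^t\cup E_{2,\lambda}^t$ according to whether $M_C\nu_t(x)$ exceeds the threshold $1/C\big(B(x,|x|-\epsilon_t)\big)$: on $E_{1,\lambda}^t$ it proves the uniform additive approximation $|M_C\nu_t-M_C\delta_0|<2\eta$ and then invokes Lemma \ref{l21} at the shifted levels $\lambda\pm 2\eta$, while $E_{2,\lambda}^t$ (where the optimal ball misses the concentrated core $B(0,\epsilon_t)$) is handled by the Wiener covering lemma, giving capacity at most $\gamma\epsilon/\lambda$. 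You instead split the \emph{measure}, $\nu=\nu|_{B(0,R)}+\nu|_{B(0,R)^c}$, prove the lower bound by the direct inclusion $B(0,r')\subseteq E_t$ with no approximation of the maximal function at all, and prove the upper bound by a trichotomy on the near-optimal ball --- it contains $0$, or its dilate meets $B(0,R)$, or it sees only tail mass --- the last case being controlled by your scaling-invariant weak-type bound, which is in substance the paper's Lemma \ref{l24} derived independently (no circularity: that lemma does not rest on Lemma \ref{l22}). The two routes share the same two pillars, namely Lemma \ref{l21} and a Vitali/Wiener covering estimate, and both lean on the same implicit regularity of $\rho\mapsto C\big(B(0,\rho)\big)$ (strict monotonicity plus continuity, forced by the assumption in Lemma \ref{l21} that every value $1/\lambda$ is attained), a dependence you correctly flag rather than hide; likewise both work with balls centered at the point, matching the convention actually used in Lemma \ref{l21}. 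What the paper's sandwich buys is stronger pointwise information --- uniform convergence of $M_C\nu_t$ to $M_C\delta_0$ away from the origin --- at the cost of juggling the parameters $\epsilon,\eta,\epsilon_1,\epsilon_t$ and shifting levels in Lemma \ref{l21}; your version trades that away for the cleaner error terms $C\big(B(0,r+tR)\big)$ and $\psi(5)\epsilon/\lambda$, and it sidesteps the delicate (and, as printed, typo-marred) two-sided inclusion argument for $E_{1,\lambda}^t$.
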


\begin{proof} For two positive numbers $\epsilon$ and $\eta$, choose $\epsilon_1$ small relative to both $\epsilon$ and $\eta$, but also let $t$ be small and the induced $\epsilon_t$ be such that
$$
\begin{cases}
\nu _t\big(B(0,\epsilon_t)\big)>1-\epsilon;\\
\epsilon_t=3^{-1}\epsilon_1;\\
\lim_{t\to 0}\epsilon_t=0;\\
\epsilon<\eta C\big(B(0,\epsilon_1)\big).
\end{cases}
$$
Now, if
$$
\begin{cases}
E_{1,\lambda}^t=\Big\{x \in \mathbb R^n\setminus B(0,\epsilon_1):\ \lambda < M_C\nu_t(x)\leq \frac{1}{C\big(B(x,|x|-\epsilon_t)\big)}\Big\};\\
E_{2,\lambda}^t=\Big\{x \in\mathbb R^n\setminus B(0,\epsilon_1):\ \max\big\{\lambda,  \frac{1}{C\big(B(x,|x|-\epsilon_t)\big)}\big\}<M_C\nu _t(x)\Big\},
\end{cases}
$$
then 
$$
E_{1,\lambda}^t\cup E_{2,\lambda}^t \cup B(0,\epsilon_1)=\{ x \in \mathbb{R}^n: M_C \nu_t(x) > \lambda\}.
$$

On the one hand, for such $x\in E_{2,\lambda}^t$ and $\forall \tilde{r}>0$ that
$$
\frac{\nu_t\big(B(x,\tilde{r})\big)}{C\big(B(x,|x|-\epsilon_t)\big)}\leq \frac{1}{C\big(B(x,|x|-\epsilon_t)\big)} <M_C\nu _t(x).
$$
Additionally, since for any $r_1,\ r_2$ satisfying $0\leq r_1\leq r_2$,
$$
C\big(B(x,r_1)\big)\leq C\big(B(x,r_2)\big),
$$
$C\big(B(x,r)\big)$ is an increasing function with respect to $r$.
There exists $r< |x|-\epsilon_t$ such that 
$$
\frac{\nu_t\big(B(x,r)\big)}{C\big(B(x,|x|-\epsilon_t)\big)}\leq \frac{\nu_t\big(B(x,r)\big)}{C\big(B(x,r)\big)} \leq M_C\nu _t(x),
$$
and hence by the Assumption 1, for any $x_i\in E^t_{2,\lambda}$ there exists $r_i>0$ such that 
$$
r_i< |x_i|-\epsilon_t \ \ \&\ \ \lambda \leq \frac{\nu_t\big(B(x_i,r_i)\big)}{C\big(B(x,r)\big)}.
$$
By the Wiener covering lemma, there exists a disjoint collection of such balls $B_i=B(x_i,r_i)$ and a constant $\alpha>0$ such that 
$$
\cup_i B_i\subseteq E_{2,\lambda}^t \subseteq \cup_i \alpha B_i,
$$
Therefore, we get a constant $\gamma>0$, which only depends on $\alpha$, such that
$$
C(E_{2,\lambda}^t)\leq \gamma \sum_i C(B_i)< \gamma \sum_i \frac{\nu_t(B_i)}{\lambda}\leq \frac{\gamma \epsilon}{\lambda},
$$
thanks to
$$
B_i \cap B(0,\epsilon_t)= \emptyset\ \ \&\ \ 1-\nu _t\big(B(0,\epsilon_t)\big)<\epsilon.
$$

On the other hand, if $x\in E_{1,\lambda}^t$, then
\begin{eqnarray*}
\frac{1-\epsilon}{C\big(B(x,|x|+\epsilon_t)\big)}&\leq& \frac{\nu_t \big(B(x,|x|+\epsilon_t)\big)}{C\big(B(x,|x|+\epsilon_t)\big)}\\
&\leq& M_C\nu _t(x)\\
&\leq& \frac{1}{C\big(B(x,|x|-\epsilon_t)\big)}.
\end{eqnarray*}
Since 
$$
\begin{cases}
\lim_{t\to 0}\left(\frac{1}{C\big(B(x,|x|+\epsilon_t)\big)}-\frac{1}{C\big(B(x,|x|-\epsilon_t)\big)}\right)=0;\\
\lim_{t\to 0}\left(\frac{1}{C\big(B(x,|x|+\epsilon_t)\big)}-\frac{1}{C\big(B(x,|x|)\big)}\right)=0,
\end{cases}
$$
for $\eta>0$ there exists $T>0$ such that 
\begin{eqnarray*}
|M_C \nu_t(t)-M_C \delta_0|&<& \eta+\frac{\epsilon}{C\big(B(0,|x|)\big)}\\
&<& \eta+\frac{\epsilon}{C\big(B(0,\epsilon_1)\big)}\\
&<& 2 \eta \quad \forall\ t\in (0,T).
\end{eqnarray*}
Note that
$$
M_C \delta_0(x)-2 \eta \leq M_C \nu _t \leq M_C \delta_0(x)+2 \eta\quad\forall\quad x\in E_{1,\lambda}^t.
$$
Thus
$$
\{x\in\mathbb R^n:\ M_C \delta_0(x)> \lambda +2 \eta\} \subseteq E_{1,\lambda}^t\subseteq \{x\in\mathbb R^n:\ M_C \delta_0(x)> \lambda +2 \eta\}.
$$
This in turn implies
\begin{eqnarray*}
&&C\big(\{x\in\mathbb R^n:\ M_C \delta_0(x)> \lambda +2 \eta\}\big)\\
&&\quad \leq\ \ C(E_{1,\lambda}^t)\\
&&\quad \leq\ \ C\big(\{x\in\mathbb R^n:\ M_C \delta_0(x)> \lambda +2 \eta\}\big).
\end{eqnarray*}
Now, an application of Lemma \ref{l21} yields
$$
\frac{1}{\lambda+2 \eta}\leq C\Big(\{x\in\mathbb R^n:\ M_C\nu_t(x)>\lambda\}\cap\big(\mathbb R^n\setminus B(0,\epsilon_1)\big)\Big)\leq \frac{1}{\lambda-2 \eta}+\frac{\gamma \epsilon}{\lambda}.
$$
Letting $t\to 0$ and using Assumption 1, we get
$$
\lim _{t\to 0}C\big(\{x\in \mathbb{R}^n:\ M_C \nu_t(x) > \lambda\}\big)=\frac1{\lambda}.
$$
\end{proof}

\begin{lemma}\label{l23} If $\nu$ is a nonnegative Borel measure on $\mathbb R^n$, then $M_C \nu(x)$ is upper semi-continuous.
\end{lemma}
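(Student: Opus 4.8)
The assertion is equivalent to the statement that for every $\lambda>0$ the superlevel set $\{x\in\mathbb R^n:\ M_C\nu(x)\ge\lambda\}$ is closed, or what amounts to the same thing, that $\limsup_{y\to x}M_C\nu(y)\le M_C\nu(x)$ at each point $x$. The plan is to prove the latter inequality by a compactness argument on the competing balls. If $M_C\nu(x)=\infty$ there is nothing to prove, so fix $x$ with $M_C\nu(x)<\infty$ and (as in the intended application, where $d\nu=|f|\,dy$) assume $\nu(\mathbb R^n)<\infty$. Choose $y_k\to x$ with $M_C\nu(y_k)\to L:=\limsup_{y\to x}M_C\nu(y)$; we may take $L>0$. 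For each $k$ pick a ball $B_k=B(c_k,r_k)$ with $y_k\in B_k$ and $\nu(B_k)/C(B_k)>M_C\nu(y_k)-k^{-1}$.

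First I would show that the balls $B_k$ range over a precompact family. Since $\nu(B_k)/C(B_k)\ge L/2>0$ for large $k$ while $\nu(B_k)\le\nu(\mathbb R^n)<\infty$, we get $C(B_k)\le 2\nu(\mathbb R^n)/L$; because by Assumption 1 the quantity $C\big(B(0,r)\big)$ depends on $r$ alone, is increasing, and grows without bound, the radii satisfy $r_k\le R$ for some $R<\infty$. As $y_k\in B_k$ and $y_k\to x$, the centres obey $|c_k|\le|y_k|+r_k\le\text{const}$, so after passing to a subsequence $c_k\to c$ and $r_k\to\rho\in[0,R]$. Put $B:=B(c,\rho)$, a closed ball. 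From $|x-c|=\lim_k|y_k-c_k|\le\lim_k r_k=\rho$ it follows that $x\in B$, so that $B$ is an admissible competitor for $x$ and hence $M_C\nu(x)\ge\nu(B)/C(B)$.

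It remains to pass to the limit in $\nu(B_k)/C(B_k)$. For the numerator, given $\epsilon>0$ one has $B_k\subseteq B(c,\rho+\epsilon)$ for all large $k$, whence $\nu(B_k)\le\nu\big(B(c,\rho+\epsilon)\big)$; letting $\epsilon\to0$ and using continuity from above of the finite measure $\nu$ gives $\limsup_k\nu(B_k)\le\nu(B)$. For the denominator, writing $g(r):=C\big(B(0,r)\big)$, the continuity of $r\mapsto g(r)$ already exploited in the proof of Lemma \ref{l22} yields $C(B_k)=g(r_k)\to g(\rho)=C(B)$. Combining the two, $L=\lim_k\nu(B_k)/C(B_k)\le\nu(B)/C(B)\le M_C\nu(x)$, which is the desired inequality.

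The main obstacle is exactly this limit passage. Precompactness of the competing balls hinges on the finiteness of $\nu$ and on $C\big(B(0,r)\big)\to\infty$, while the harmless behaviour of the numerator relies on the upper semicontinuity $\limsup_k\nu(B_k)\le\nu(B)$ for closed balls shrinking onto $B$; together with the continuity of $r\mapsto C\big(B(0,r)\big)$ these prevent the ratio from jumping upward in the limit. Keeping track of the boundary mass of $\nu$ and of the continuity of $g$ at the limiting radius $\rho$ is the only genuinely delicate point, everything else being routine.
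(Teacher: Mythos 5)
Your proof attacks the literal statement ``upper semi-continuous,'' but that is a mislabel in the paper: what the paper's own argument establishes, and what is actually invoked later (in the proof of Lemma \ref{l24}, where Lemma \ref{l23} is cited to conclude that $E_\lambda=\{x:M_C\nu(x)>\lambda\}$ is \emph{open}), is \emph{lower} semicontinuity, i.e.\ openness of the strict superlevel sets. The paper's proof is the standard two-line enlargement trick: if $\nu\big(B(x,r)\big)/C\big(B(x,r)\big)>\lambda$, pick $s>r$ with $\nu\big(B(x,r)\big)/C\big(B(x,s)\big)>\lambda$, and then for $|z-x|<s-r$ one has $B(x,r)\subseteq B(z,s)$ and, by Assumption 1 (ball capacity depends only on the radius), $M_C\nu(z)\ge \nu\big(B(x,r)\big)/C\big(B(x,s)\big)>\lambda$. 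The genuine upper semicontinuity you set out to prove, $\limsup_{y\to x}M_C\nu(y)\le M_C\nu(x)$, is in fact \emph{false} under the paper's hypotheses. Take $n=1$ and $C=\mathscr L$ (outer Lebesgue measure is a capacity satisfying Assumptions 1--2 with $\phi(t)=\psi(t)=t$, $\tau=1$), and $d\nu=f\,dy$ with $f=\sum_{k\ge1}k\,\chi_{I_k}$, where $I_k$ is the interval of length $2^{-k}/k$ centered at $1/k$; then $\|f\|_1=1$, and $M_C\nu(1/k)\ge \nu(I_k)/|I_k|=k\to\infty$, while $M_C\nu(0)<\infty$ because any interval $[-a,b]\ni 0$ satisfies $\nu([-a,b])\le 2\cdot 2^{-3/(4b)}$, so that $\nu([-a,b])/(a+b)$ is bounded uniformly in $a,b$. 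Hence $\limsup_{y\to0}M_C\nu(y)=\infty>M_C\nu(0)$: upper semicontinuity fails at the origin.

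The hole in your compactness argument sits exactly where this counterexample lives: the degenerate case $\rho=\lim_k r_k=0$, which nothing in your argument excludes. Indeed, when $L=\infty$ (as above) your own bound $C(B_k)\le 2\nu(\mathbb R^n)/L$ forces $C(B_k)\to0$ and hence $r_k\to0$; then your limit ``ball'' is $B=\{x\}$, Assumption 1 gives $C(B)=C(\{x\})=0$, and the concluding inequality $L\le \nu(B)/C(B)\le M_C\nu(x)$ is a division by zero. One cannot repair this by enlarging the $B_k$ to balls containing $x$, since they need not contain $x$, and replacing $B_k$ by $B\big(x,|c_k-x|+r_k\big)$ costs a factor $C\big(B(x,|c_k-x|+r_k)\big)/C(B_k)$ in the denominator that is uncontrolled when $r_k\to0$. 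Two lesser problems: the unboundedness of $r\mapsto C\big(B(0,r)\big)$ does not follow from Assumption 1 as you assert (it does follow from Assumption 2, via $C\big(B(0,r)\big)\ge C\big(B(0,1)\big)/\psi(1/r)$), and the continuity of $g(r)=C\big(B(0,r)\big)$ is nowhere established in the paper --- monotonicity is all that is proved, so your limit $g(r_k)\to g(\rho)$ is also unjustified. The correct course is to discard upper semicontinuity and prove openness of $\{M_C\nu>\lambda\}$ as the paper does; that is the only property the rest of the paper uses.
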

\begin{proof}
According to the definition of $M_C \nu(x)$, there exists a radius $r$ corresponding to $M_C \nu(x)>\lambda>0$ such that
$$
\frac{\nu(B(x,r))}{C(B(x,r))}>\lambda.
$$ 
For a slightly larger number $s$ with $\lambda+\delta>s> r$, we have
$$
\frac{\nu(B(x,r))}{C(B(x,s))}>\lambda.
$$
Then applying Assumption 1, for any $z$ satisfying $|z-x|< \delta$,
$$
M_C \nu(z) \geq \frac{\nu(B(z,s))}{C(B(z,s))} \geq \frac{\nu(B(x,r))}{C(B(x,s))}>\lambda.
$$
Thereby, the set $\{x\in\mathbb R^n: M_C \nu(x)>\lambda\}$ is open, as desired.
\end{proof}

\begin{lemma}\label{l24}  If $\nu$ is a finite nonnegative Borel measure on $\mathbb R^n$, then there exists a constant $\gamma>0$ such that
$$
\lambda C\big( \{x\in\mathbb R^n:\ M_C \nu(x)>\lambda\} \big)\leq \gamma \nu(\mathbb R^n).
$$
\end{lemma}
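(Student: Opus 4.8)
The plan is to read this as a weak type $(1,1)$ bound for $M_C\nu$ and to prove it by a Vitali/Wiener covering argument, exactly in the spirit of the covering step already carried out inside the proof of Lemma \ref{l22}. Write $E_\lambda=\{x\in\mathbb R^n:\ M_C\nu(x)>\lambda\}$, which is open by Lemma \ref{l23}. For each $x\in E_\lambda$ the definition of $M_C\nu$ furnishes a ball $B_x\ni x$ with $\nu(B_x)>\lambda\,C(B_x)$, equivalently $C(B_x)<\nu(B_x)/\lambda\le\nu(\mathbb R^n)/\lambda$. In particular every selected ball has capacity bounded by $\nu(\mathbb R^n)/\lambda$, which together with the monotonicity of $C(B(x,r))$ in $r$ keeps the relevant radii under control (truncating to a large ball and letting its radius tend to infinity if necessary), so that the covering lemma is applicable.

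Next I would invoke the Wiener covering lemma to extract a pairwise disjoint subcollection $\{B_i=B(x_i,r_i)\}$ together with a dilation constant $\alpha>0$ (depending only on $n$) for which $E_\lambda\subseteq\bigcup_i\alpha B_i$. Subadditivity of $C(\cdot)$ then gives $C(E_\lambda)\le\sum_i C(\alpha B_i)$. Everything hinges on the comparison $C(\alpha B_i)\le\psi(\alpha)\,C(B_i)$, which is \emph{not} immediate from Assumption 2 alone: the concentric dilation $\alpha B_i=B(x_i,\alpha r_i)$ is not the origin-centered dilation $\alpha E=\{\alpha x\}$ to which Assumption 2 refers. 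I would bridge this gap using Assumption 1: since $C(B(x,r))$ depends on $r$ only, one has $C(B(x_i,\alpha r_i))=C(B(0,\alpha r_i))=C(\alpha B(0,r_i))\le\psi(\alpha)\,C(B(0,r_i))=\psi(\alpha)\,C(B(x_i,r_i))$, so the desired constant is simply $\gamma=\psi(\alpha)$.

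With that comparison in hand the estimate closes quickly: combining the two displays and then applying the defining inequality $C(B_i)<\nu(B_i)/\lambda$ yields $C(E_\lambda)\le\psi(\alpha)\sum_i C(B_i)<\tfrac{\psi(\alpha)}{\lambda}\sum_i\nu(B_i)$, while the disjointness of the $B_i$ gives $\sum_i\nu(B_i)=\nu(\bigcup_i B_i)\le\nu(\mathbb R^n)$. Hence $\lambda\,C(E_\lambda)\le\gamma\,\nu(\mathbb R^n)$ with $\gamma=\psi(\alpha)$, as claimed.

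I expect the only genuinely delicate point to be the reduction of the concentric dilation to the origin-centered one through Assumption 1 (and the attendant requirement that $\psi(\alpha)<\infty$ for the fixed covering constant $\alpha$). Once that reduction is secured, subadditivity and disjointness do the rest; this is precisely the mechanism that was used, without explicit comment, in Lemma \ref{l22}, so the present lemma is in effect a clean extraction of that sub-argument.
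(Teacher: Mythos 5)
Your proposal is correct and is essentially the paper's own argument: the paper likewise covers the level set (open by Lemma \ref{l23}) with balls $B_x$ satisfying $\nu(B_x)>\lambda C(B_x)$, extracts a disjoint subfamily by a Vitali/Wiener-type covering lemma, and uses countable subadditivity together with the scaling hypothesis to get $C(E_\lambda)\le\gamma\sum_i C(B_i)\le (\gamma/\lambda)\sum_i\nu(B_i)\le(\gamma/\lambda)\nu(\mathbb{R}^n)$, with your Assumption-1 bridge from concentric dilations $B(x_i,\alpha r_i)$ to origin-centered dilations being exactly the detail the paper leaves implicit in citing ``Assumption 2.'' One small repair: the uniform radius bound needed to apply the covering lemma does not follow from monotonicity of $C(B(x,r))$ in $r$ alone; instead use Assumption 2 with $t=1/r$ to get $C(B(0,r))\ge C(B(0,1))/\psi(1/r)\to\infty$ as $r\to\infty$, so the bound $C(B_x)<\nu(\mathbb{R}^n)/\lambda$ does force the selected radii to be uniformly bounded (the degenerate case $C(B(0,1))=0$ renders the lemma trivial).
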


\begin{proof} Following the argument for \cite[Page 39, Theorem 5.6]{article6}, we set $E_{\lambda}=\{x\in\mathbb R^n:\  M_C \nu(x)>\lambda\}$, and then select a $\nu$-measurable set $E\subseteq E_{\lambda}$ with $\nu(E)<\infty$. Lemma \ref{l23} proves that $E_\lambda$ is open. Therefore, for each $x\in E$, there exists an $x$-related ball $B_x$ such that 
$$
\frac{\nu(B_x)}{C(B_x)}>\lambda.
$$
A slight modification of the proof of \cite[Page 39, Lemma 5.7]{article6} applied to the collection of balls $\{B_x\}_{x\in E}$, and Assumption 2, show that we can find a sub-collection of disjoint balls $\{B_i\}$ and a constant $\gamma>0$ such that
$$
C(E)\leq \gamma \sum_{i} C(B_i)\leq \sum_{i} \frac{\gamma}{\lambda}\nu(B_i)\leq \frac{\gamma}{\lambda} \nu(\mathbb R^n).
$$
Note that $E$ is an arbitrary subset of $E_{\lambda}$. Thereby, we can take the supremum over all such $E$ and then get
$$C(E_{\lambda})< \frac{\gamma}{\lambda}\nu(\mathbb R^n).$$
\end{proof}
\section{Proof of Theorem}\label{s3}

First of all, suppose that $\nu$ is a finite nonnegative Borel measure on $\mathbb R^n$ with $\nu(\mathbb R^n)=1$. According to the definition of the capacitary maximal function, we have
$$
M_C \nu_t(x)=\sup_{r>0} \frac{\nu_t(B(x,r))}{C(B(x,r))}=\sup_{r>0} \frac{\nu(B(\frac{x}{t},\frac{r}{t}))}{C(tB(\frac{x}{t},\frac{r}{t}))}.
$$
From Assumption 2 it follows that
$$
\frac{M_C \nu (\frac{x}{t})}{\psi(t)} \leq M_C \nu_t(x)\leq \frac{M_C \nu (\frac{x}{t})}{\phi(t)},
$$
and such that
\begin{eqnarray*}
\Big\{x\in\mathbb R^n:\  M_C\nu(\frac{x}{t})> \lambda \psi(t)\Big\}&\subseteq& \Big\{x\in\mathbb R^n:\  M_C\nu_t(x)> \lambda\Big\}\\
&\subseteq& \Big\{x\in\mathbb R^n:\  M_C\nu(\frac{x}{t})> \lambda \phi(t)\Big\}.
\end{eqnarray*}
The last inclusions give that
\begin{eqnarray*}
&&
\frac{\phi(t)}{\psi(t)}\lambda\psi(t) C\big(\{x\in\mathbb R^n:\  M_C\nu(x)> \lambda \psi(t)\}\big)\\
&&\quad\le\lambda \phi(t) C\big(\{x\in\mathbb R^n:\  M_C\nu(x)> \lambda \psi(t)\}\big)\\
&&\quad\leq\lambda C\big(\{tx\in\mathbb R^n:\  M_C\nu(x)> \lambda \psi(t)\}\big)\\
&&\quad=\lambda C\big(\{x\in\mathbb R^n:\  M_C\nu(x/t)> \lambda \psi(t)\}\big)\\
&&\quad\leq \lambda C\big(\{x\in\mathbb R^n:\  M_C\nu_t(x)> \lambda\}\big)\\
&&\quad\leq\lambda C\big(\{x\in\mathbb R^n:\  M_C\nu(x/t)> \lambda \phi(t)\}\big)\\
&&\quad=\lambda C\big(\{tx\in\mathbb R^n:\  M_C\nu(x)> \lambda \phi(t)\}\big)\\
&&\quad\leq \lambda \psi(t) C\big(\{x\in\mathbb R^n:\  M_C\nu (x)> \lambda \phi(t)\}\big)\\
&&\quad\leq \frac{\psi(t)}{\phi(t)} \lambda\phi(t) C\big(\{x\in\mathbb R^n:\  M_C\nu (x)> \lambda \phi(t)\}\big).
\end{eqnarray*}
These estimates and Lemma \ref{l22}, plus applying Assumption 2 and letting $t\to 0$, in turns derive 
\begin{eqnarray*}
\tau^{-1}&\le&\liminf_{\lambda \to 0} \lambda C\big(\{ x \in \mathbb{R}^n: M_C \nu(x) > \lambda\}\big)\\
&\le&\limsup_{\lambda \to 0} \lambda C\big(\{ x \in \mathbb{R}^n: M_C \nu(x) > \lambda\}\big)\le\tau.
\end{eqnarray*}

Next, let 
$$h(\lambda)=\lambda C\big(\{x\in \mathbb R^n: M_C \nu>\lambda\}\big).$$
By Lemma \ref{l24} and the last estimate for both the limit inferior and the limit superior, there exists two constants $A>0$ and $\lambda_0>0$ such that  
$$A\leq h(\lambda)\leq \gamma\quad\forall\quad \lambda\in (0,\lambda_0).$$
Moreover, for any given $\varepsilon>0$, choose a sequence $\{y_i=\big[\frac{\gamma}{A}(1-\varepsilon)^N\big]^i\}_1^{\infty}$, where $N$ is a natural number satisfying $\frac{\gamma}{A}(1-\varepsilon)^N<1$. Then, there exists an integer $N_0\ge 1$, such that $y_{N_0}<\lambda_0$. Hence, for any $n>m>N_0$ we have
\begin{eqnarray*}
&&|h(y_m)-h(y_n)|\\
&&\ \ \le\ |y_m C\big(\{x\in\mathbb R^n:\ \  M_C\nu(x)> y_m\}\big)-y_n C\big(\{x\in\mathbb R^n:\ \  M_C\nu(x)> y_n\}\big)|\\
&&\ \ \le\ |y_m-y_n| C\big(\{x\in\mathbb R^n:\ \  M_C\nu(x)> y_m\}\big)\\
&&\ \ \ +\  y_n |C\big(\{x\in\mathbb R^n:\ \  M_C\nu(x)> y_m\}\big)-C\big(\{x\in\mathbb R^n:\ \  M_C\nu(x)> y_n\}\big)|\\
&&\ \ \le\ |y_m-y_n| \frac{\gamma}{y_m}+y_n |\frac{\gamma}{y_n}-\frac{A}{y_m}|\\
&&\ \ \le\ \gamma(1-\frac{y_n}{y_m})+(\gamma-A\frac{y_n}{y_m})\\
&&\ \ \le\ \gamma(1-\big[\frac{\gamma}{A}(1-\varepsilon)^N\big]^{n-m}) +(\gamma-A\big[\frac{\gamma}{A}(1-\varepsilon)^N\big]^{n-m})\\
&&\ \ \le\ \gamma(1-(1-\varepsilon)^{N(n-m)}) +(\gamma-\gamma(1-\varepsilon)^{N(n-m)})\\
&&\ \ \le\ 2\gamma N(n-m)\varepsilon.
\end{eqnarray*}
Consequently, $\{h(y_i)\}$ is a Cauchy sequence, $D=\lim_{i \to \infty} h(y_i)$ exists. Note that for any small $\lambda$ there exists a large $i$ such that
$$y_{i+1}\leq \lambda\leq y_i.$$
Thereby, from the triangle inequality it follows that if $i$ is large enough then
\begin{eqnarray*}
|h(\lambda)-D| &\le& |h(\lambda)-h(y_i)|+|h(y_i)-D|\\
&\le& |y_i-\lambda| \frac{\gamma}{y_i}+\lambda |\frac{\gamma}{\lambda}-\frac{A}{y_i}|+|h(y_i)-D|\\
&\le& \gamma(1-\frac{\lambda}{y_i})+(\gamma-A\frac{\lambda}{y_i})+|h(y_i)-D|\\
&\le& \gamma(1-\frac{y_{i+1}}{y_i})+(\gamma-A\frac{y_{i+1}}{y_i})+|h(y_i)-D|\\
&\le& (2\gamma N+1)\varepsilon
\end{eqnarray*}
This in turn implies that $\lim_{\lambda \to 0} \lambda C\big(\{ x \in \mathbb{R}^n: M_C \nu(x) > \lambda\}\big)$ exists, and consequently,
$$
{\tau}^{-1} \leq \lim_{\lambda \to 0} \lambda C\big(\{ x \in \mathbb{R}^n: M_C \nu(x) > \lambda\}\big) \leq \tau
$$
holds.

Finally, upon employing the given $L^1(\mathbb R^n)$ function $f$ with $\|f\|_1>0$ to produce a finite nonnegative measure $\nu$ with $\nu(\mathbb{R}^n)=1$ via
$$
\nu(E)=\frac{1}{||f||_1}\int_E |f(y)|dy\quad\forall\quad E\subseteq\mathbb R^n,
$$
we obtain
$$
\lim _{\lambda \to 0} \lambda C\big(\{x\in \mathbb{R}^n:\ M_C f(x) > \lambda ||f||_1\}\big)\approx 1,
$$
thereby getting
$$
\lim _{\lambda \to 0} \lambda \|f\|_1 C\big(\{ x \in \mathbb{R}^n:\ M_C f(x) > \lambda \|f\|_1\}\big)\approx\|f\|_1.
$$
By setting $\tilde{\lambda}=\lambda \|f\|_1$ in the last estimate, we reach the desired result.


\begin{thebibliography}{10}

\bibitem{article0} D. R. Adams, {\it Choquet integrals in potential theory}. {Publ. Mat.} 42 (1998), 3--66.

\bibitem{article3} I. Asekritova, J. Cerda, N. Kruglyak, {\it The Riesz-Herz equivalence for capacity maximal functions}. {Rev. Mat. Complut.} 25 (2012), 43--59.

\bibitem{article1} P. Janakiraman, {\it Limiting weak-type behavior for singular integral and maximal operators}. {Trans. Amer. Math. Soc.} 358 (2006), 1937--1952.

\bibitem{article2} P. Janakiraman, {\it Limiting weak-type behavior for the Riesz transform and maximal operator when $\lambda\to\infty$}. {Michigan Math. J.} 55 (2007), 35--50.

\bibitem{article4} J. Kinnunen, {\it The Hardy-Littlewood maximal function of a Sobolev function}. {Israel J. Math.} 100 (1997), 117--224.

\bibitem{article5} J. Xiao, {\it Carleson embeddings for Sobelev spaces via heat equation}. {J. Differential Equations} 224 (2006), 277--295.

\bibitem{article6} R. Brown, {\it Lecture notes: harmonic analysis}. http://www.ms.uky.edu/~rbrown/courses\\
/ma773/notes.pdf.

\end{thebibliography}
\end{document}